\documentclass[reqno,10pt]{amsart}
\usepackage{xspace}
\usepackage[bookmarksnumbered,colorlinks]{hyperref}
\usepackage{graphics}


\newcommand{\bt}{\begin{theorem}}                               
\newcommand{\et}{\end{theorem}}                                 
\newcommand{\bd}{\begin{definition}}                            
\newcommand{\ed}{\end{definition}}                              
\newcommand{\bl}{\begin{lemma}}                                 
\newcommand{\el}{\end{lemma}}                                   
\newcommand{\bpr}{\begin{proposition}}                  
\newcommand{\epr}{\end{proposition}}                    
\newcommand{\bere}{\begin{remark}}                      
\newcommand{\ere}{\end{remark}}                                 
\newcommand{\beq}{\begin{equation}}
\newcommand{\eeq}{\end{equation}}
\def\bal#1\eal{\begin{align}#1\end{align}}                      
\def\baln#1\ealn{\begin{align*}#1\end{align*}}          
\def\bml#1\eml{\begin{multline}#1\end{multline}}        
\def\bmln#1\emln{\begin{multline*}#1\end{multline*}}  
\def\bga#1\ega{\begin{gather}#1\end{gather}}
\def\bgan#1\egan{\begin{gather*}#1\end{gather*}}
\newcommand{\de}{\mathrm{d}}

\newcommand{\R}{\ensuremath{\mathbb{R}}\xspace}     
\newcommand{\eps}{\varepsilon}                      
\newcommand{\To}{\longrightarrow}                   
\newcommand{\inte}{\int_0^1\!\!}
\newcommand{\Om}{\ensuremath{\Omega_{p,q}(M)}\xspace}
\newtheorem{theorem}{Theorem}[section]

\newtheorem{lemma}[theorem]{Lemma}
\newtheorem{proposition}[theorem]{Proposition}
\theoremstyle{definition}
\newtheorem{definition}[theorem]{Definition}
\theoremstyle{remark}
\newtheorem{remark}[theorem]{Remark}
\hyphenation{Lo-ren-tzian}

\title[The index of a geodesic in a Randers space]{The index of a geodesic in a Randers space and some remarks about the lack of regularity of the energy functional of a Finsler metric}

\author[E. Caponio]{Erasmo Caponio}
\address{Dipartimento di Matematica, Politecnico di Bari, Via Orabona 4, 
70125, Bari, Italy}
\email{caponio@poliba.it}
\thanks{Supported by M.I.U.R. Research project PRIN07 "Metodi Variazionali e  
Topologici nello Studio di Fenomeni Nonlineari"
}

\subjclass[2000]{53C22, 53C50, 53C60, 58B20}

\keywords{Stationary Lorentzian manifolds, lightlike geodesics, Morse index, Finsler metric, Randers space.}

\date{July 31, 2009}

\begin{document}

\begin{abstract}
In a series of papers (\cite{CaJaMa08,  CaJaMa09, CaJaS09})  the relations existing between the metric properties of  Randers spaces and  the conformal geometry of  stationary Lorentzian manifolds  were discovered and investigated. These relations were called in \cite{CaJaS09} {\em Stationary-to-Randers Correspondence (SRC)}.
In this paper we focus on one aspect of SRC,  the equality  between the index of a geodesic in a Randers space and that of its lightlike lift in the associated conformal stationary spacetime.
Moreover we make some remarks about regularity of the energy functional of a Finsler metric on the infinite dimensional manifold of $H^1$ curves connecting two points, in connection with  
infinite dimensional techniques in Morse Theory.
\end{abstract}
\maketitle
\begin{section}{Introduction}
Let $S$ be a manifold of dimension $n$ and $R=\sqrt{h}+\omega$  be a Randers metric on $S$. To $(S,R)$ we associate a one-dimensional higher manifold $M=S\times\R$ endowed with the bilinear symmetric tensor 
\[
g=h-(\omega-\de t)^2.\]
The condition on the norm of $\omega$ ensuring that $R$ is a positive definite function on $TS$, i.e  $(\omega_p(v))^2< h_p(v,v)$ for all $v\in T_pS$ and for all $p\in S$, makes $g$ a non-degenerate symmetric bilinear form of index $1$, that is a Lorentzian metric on $S\times\R$. 

Let $t$ be the natural coordinate on $\R$. The vector field $\partial_t=\frac{\partial}{\partial t}$ on $S\times\R$ is timelike at any point (i.e. 
$g_p(\partial_t,\partial_t)<0$, for all $p\in M$) and  it is a Killing vector field for $g$. A Lorentzian manifold admitting a timelike Killing vector field is called {\em stationary} (see for instance \cite[p. 119]{Wald84}) and whenever the timelike Killing vector field is irrotational is said {\em static}.

For any fixed $p\in S$, the function $R(p,\cdot)\colon T_pS\to [0,+\infty)$ arises  as the non-negative solution of the equation in the variable $\tau$
\beq\label{null}
h_p(v,v)-(\omega_p(v)-\tau)^2=0.\eeq
Eq. \eqref{null} and $\tau> 0$ are the conditions that a future pointing lightlike vector $(v,\tau)\in T_pS\times\R$ has to satisfy by definition. 

We recall that a Lorentzian manifold $(M,g)$  is said {\em time-oriented} if it admits a smooth timelike vector field $Y$. In particular a stationary Lorentzian manifold is time-oriented by one of its timelike Killing vector field. A vector $v\in T_p M$ is said {\em future pointing} (resp. {\em past pointing}) if $g_p(v,Y)<0$ (resp. $g_p(v,Y)>0$) and {\em lightlike} if $g_p(v,v)=0$. Analogously, a smooth curve $\gamma\colon [a,b]\to M$ is said future pointing, past pointing,   lightlike iff its velocity vector field is future pointing, past pointing, lightlike.
Observe that if $(v,\tau)$ is past pointing and lightlike then $\tau$ is equal to the non-positive solution of \eqref{null} and $-\tau$ is equal to the Randers metric obtained reversing $R$, that is $-\tau=R(p,-v)$.

In analogy with a terminolgy used for static spacetimes (cf. \cite[p. 360]{One83}), a stationary Lorentzian manifold $(M,g)$ is said {\em standard} if it is isometric to a product manifold $S\times\R$ endowed with the metric 
\[g_0+w\otimes\de t+\de t\otimes w-\beta\de t^2,\]
where $g_0$, $w$ and $\beta$ are respectively a Riemannian metric, a one-form and a positive function on $S$.  The conditions defining future pointing lightlike vectors on $(M,g)$ define now the non-negative function on $TS$
\[
R=\sqrt{g_0/\beta+(w/\beta)^2}+w/\beta.
\]
Whatever the one-form $w$ is, the norm of $w/\beta$ with respect to the Riemannian metric 
\beq\label{h}
h=g_0/\beta+(w/\beta)^2\eeq 
is less than $1$ and thus $R$ is a Randers metric. 

Since Eq. \eqref{null}  is invariant under conformal transformations of the metric $g$, the same Randers metric $R$ is associated to the conformal class of $g$. Conversely, a Randers space $(S,R)$ individuates the  conformal standard stationary Lorentzian manifold   $(S\times\R,h-(\omega-\de t)^2)$. 

The bijection between Randers spaces and conformal standard stationary Lorentzian manifolds has been called in \cite{CaJaS09} {\em Stationary-to-Randers correspondence (SRC)} and it has been used in \cite{CaJaMa08} and in \cite{CaJaS09} to study the causal structure of a conformal standard stationary Lorentzian manifold.

One of the basic observation about SRC is that there is a one-to-one correspondence between lightlike geodesics of the conformal standard stationary Lorentzian manifold and the geodesics of the associated Randers space. Going into more details, we mention that lightlike geodesics on a Lorentzian manifold are invariant under conformal changes of the metric in the sense that if $\gamma\colon [0,1]\to M$ is a lightlike geodesic of $(M,g)$ then $\gamma$ is a pregeodesic of $\lambda g$ for any positive function $\lambda$, i. e. there exists a reparametrization $\sigma\colon [0,1]\to [0,1]$ such that $\gamma\circ\sigma$ is a lightlike geodesic of $(M,\lambda g)$ (see for example \cite[p.14]{MinS08}). We consider now a conformal  standard stationary Lorentzian manifold $(S\times\R, g)$ and we take, as representative  of the class the metric,   $h-(\omega-\de t)^2$, where $h$ is equal to \eqref{h} and $\omega=w/\beta$.
If $z(s)=(x(s),t(s))$ is a future pointing lightlike geodesic  of $(S\times\R,h-(\omega-\de t)^2)$ then (see \cite[Theorem 4.5]{CaJaMa08} $x$ is a geodesic of the Randers space $(S, R)$, $R=\sqrt{h}+\omega$, parametrized with $h(\dot x,\dot x)=\mathrm{const.}$ The fact that $x$ has to be parametrized with constant Riemannian speed  can be  seen   recalling that $g(\dot z,\dot z)=0$
 and, since  $\partial_t$ is a Killing vector field, $g(\dot z,\partial_t)=\omega(\dot x)-\dot t=\mathrm{const.}$ thus also $h(\dot x,\dot x)$ has to be constant. 
 
The other way round, a geodesic $x=x(s)$ in $(S,R)$ can be lifted to a future pointing lightlike curve on $S\times\R$ by taking 
\beq\label{t}
t=t(s)=t_0+\int_{s_0}^s R(x,\dot x).\eeq
If $x$ is parametrized with constant Riemannian speed, its  future pointing  lightlike lift is a lightlike geodesic of $(S\times\R, 
h-(\omega-\de t)^2)$. 

The same relation holds between geodesics of the reversed metric $\tilde R(x,v)=R(x,-v)$ and past pointing lightlike geodesic 
of $(S\times\R, 
h-(\omega-\de t)^2)$.

In Section 2 of this note, we  focus on one aspect of SRC that is the equality between the  index of a geodesic in the Randers space $(S, R)$ and the index of its future pointing lightlike lift in $(S\times\R,   h-(\omega-\de t)^2)$.

An immediate consequence of this equality (which holds also for a geodesic of the reversed Randers metric $\tilde R$ and the corresponding past pointing lightlike geodesic of $(S\times\R, 
h-(\omega-\de t)^2)$),
 is that the index of a lightlike geodesic is a conformal invariant for standard stationary Lorentzian manifolds.    
This gives an alternative proof to a well known fact which holds for any conformal Lorentzian manifold (see for example \cite[Theorem 2.36]{MinS08}).

Another consequence of this equality is that the Morse theory for future pointing lightlike geodesic connecting a point $\tilde p=(p,t_0)$ to an integral line  of the timelike Killing vector field $\partial_t$ passing through the point $\tilde q=(q,t_0)$,
can be reduced to the Morse theory for geodesics  connecting the points $p$ and $q$ in the associated Randers space.

Altough Morse theory for geodesics connecting two points on a Finsler manifold $(M,F)$ can be developed by using finite dimensional approximations of the path space  by broken geodesics (see \cite{Shen01}), infinite dimensional techniques in Morse theory  can be adapted to work in the Sobolev manifold $\Omega_{p,q}(M)$ of the $H^1$ curves connecting the points $p$ and $q$. The main problem in regard to this approach is the lack of twice Frechet differentiability of the energy functional $E$ of a Finsler metric at any critical point with respect to the $H^1$--topology. Anyway $E$ has enough regularity to get a version of the generalized Morse Lemma which allows us to compute the critical groups and to obtain the Morse relations (see \cite{CaJaMa09}).   In Section 3 we illustrate what is the problem in trying to prove that $E$ is twice Frechet differentiable with respect to the $H^1$--topology and we will extend to the Finsler case a recent argument by A. Abbandondandolo and M. Schwartz \cite{AbbSch08}. In fact, in \cite{AbbSch08} the authors prove that a smooth time dependent Lagrangian $L\colon [0,1]\times TM\to \R$, which is subquadratic in the velocities and whose action  functional is twice Frechet differentiable at a regular curve  on the Sobolev  manifold $\Omega(M)$ of all the $H^1$ curves on $M$, must be a polynomial of degree at most two in the velocity variables along the curve.
This  fact can be seen as an infinite dimensional version of the well known property that if the square of a Finsler metric is $C^2$ on the whole $TM$ then actually it  is the square of the norm of a Riemannian metric. 
\end{section}
\begin{section}{The equality between the indexes}
Let $M$ be a Lorentzian or a Finsler manifold and let $\gamma$ be a geodesic on $M$.
By $\mu(\gamma)$ we denote the {\em index} of $\gamma$, that is the number of conjugate points along $\gamma$ counted with their multiplicity.  The equality between $\mu(x)$, where $x$ is a geodesic 
of the Randers space $(S,R)$, and $\mu(z)$, where $z$ is the future pointing lightlike lift of $x$ in $(S\times\R,g=h-(\omega-\de t)^2)$, can be carried out by comparing the Jacobi equation of $x$ in $(S,R)$ with the Jacobi equation of $z$ in $(S\times\R,h-(\omega-\de t)^2)$, as done in \cite[Theorem 13 ]{CaJaMa09}.

Here we give a different proof based on a  comparison of the Morse index  of the energy functional of the Randers metric at $x$ 
and the Morse index at $z$ 
of the  functional introduced by Uhlenbeck in \cite{Uhlenb75}:
\[
J(\sigma)=\inte\big(g(\dot \sigma,\dot \sigma)+\big(\tfrac{\de P(\sigma)}{\de s}\big)^2\big)\de s.\]
Here $\sigma$ belongs to the set of piecewise differentiable  curves on $S\times\R$,
satisfying the constraint $g(\dot \sigma,\dot \sigma)=0$  and the boundary conditions $\sigma(0)=\tilde p\in S\times\R$, $\sigma(1)\in l(\R)$, where $l=l(s)$ is an integral line of the Killing vector field $\partial_t$ ($\tilde p\not\in l(\R)$) and $P\colon     S\times\R\to \R$ is the natural projection on $\R$.

The critical point of $J$ are the lightlike geodesics connecting $\tilde p$ to $l(\R)$.
Moreover $J$ admits second variation at any critical point.
A critical point is non degenerate if and only if its endpoints are non-conjugate. The Morse index of a critical point is finite and it is equal to $\mu(z)$ (see \cite[Lemma 4.2]{Uhlenb75}).
Using these properties of $J$ we can prove the following 
\begin{theorem}
Let $(S\times\R,h-(\omega-\de t)^2)$ be the conformal standard stationary spacetime associated by SRC to $(S,R)$ and   $z(s)=(x(s),t(s))\colon[0,1]\to S\times\R$ be the future pointing lightlike geodesic associated to the geodesic $x(s)$ in $(S,R)$. 
Then 
the points $x(0)$ and $x(1)$ are non-conjugate along $x$ in $(S,R)$ if and only if the points $z(0)$ and $z(1)$ are non-conjugate along 
$z$ in $(S\times\R,h-(\omega-\de t)^2)$.  Moreover
\[\mu(z)=\mu(x).\]
\end{theorem}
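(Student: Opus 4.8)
The plan is to exploit the two variational problems that compute the two indices and to build an explicit isomorphism between the relevant spaces of variation fields that carries one index form to the other. On the Randers side, $\mu(x)$ is the Morse index of the energy functional $E_R(y)=\tfrac12\inte R^2(y,\dot y)\,\de s$ at the critical point $x$, computed on the $H^1$ (or piecewise-differentiable) space of curves in $S$ with fixed endpoints $x(0),x(1)$; its index form is the Hessian $d^2E_R(x)$, whose nullity counts conjugate pairs. On the spacetime side, $\mu(z)$ is, by \cite[Lemma 4.2]{Uhlenb75}, the Morse index of Uhlenbeck's functional $J$ at the lightlike geodesic $z$, computed on the constrained space of piecewise-differentiable curves $\sigma$ in $S\times\R$ with $g(\dot\sigma,\dot\sigma)=0$, $\sigma(0)=\tilde p$, $\sigma(1)\in l(\mathbb R)$, and again the nullity of $d^2J(z)$ detects conjugacy of the endpoints $z(0),z(1)$. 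So it suffices to produce a linear isomorphism between the tangent space to the Randers path space at $x$ and the tangent space to Uhlenbeck's constraint manifold at $z$ under which the two index forms agree up to a positive factor; this simultaneously yields equality of nullities (hence the non-conjugacy equivalence) and equality of the maximal dimensions of negative-definite subspaces (hence $\mu(z)=\mu(x)$).

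First I would write down the tangent data explicitly. A variation field along $z=(x,t)$ has the form $\zeta=(\xi,\vartheta)$ with $\xi\in T_{x(s)}S$ and $\vartheta\in\mathbb R$; the boundary conditions force $\xi(0)=0$, $\xi(1)=0$ (from $\sigma(0)=\tilde p$ and $\sigma(1)\in l(\mathbb R)$, whose $S$-component is fixed) while $\vartheta(0)=0$ and $\vartheta(1)$ is free. The linearized lightlike constraint, obtained by differentiating $h(\dot x,\dot x)-(\omega(\dot x)-\dot t)^2=0$ along $z$ and using \eqref{t}, expresses $\dot\vartheta$ (equivalently, the one free boundary value $\vartheta(1)$, by integration) in terms of $\xi$ and $\dot\xi$. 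Thus the map $\zeta\mapsto\xi$ identifies the tangent space to Uhlenbeck's constraint manifold at $z$ with the space of $\xi$ along $x$ vanishing at both endpoints — exactly the tangent space to the Randers path space at $x$. The next step is to substitute this parametrization into $d^2J(z)[\zeta,\zeta]$: using the constraint to eliminate $\vartheta$, and using that $z$ is lightlike so the $(\de P(\sigma)/\de s)^2$ term contributes only through $\dot t=\omega(\dot x)$ along $z$, one computes that $d^2J(z)[\zeta,\zeta]$ reduces, after an integration by parts that kills the boundary terms (thanks to $\xi(0)=\xi(1)=0$), to a multiple of the Randers index form $d^2E_R(x)[\xi,\xi]$. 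The precise constant comes from the relation $\dot t=R(x,\dot x)$ and the constant-Riemannian-speed normalization of $x$; I expect it to be a fixed positive number (essentially $2$, matching the $1/2$ in $E_R$), so positivity, negativity and degeneracy of the two quadratic forms correspond term by term.

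The main obstacle is the constrained second variation: $J$ is defined on a nonlinear submanifold (the lightlike curves), so $d^2J(z)$ is the Hessian of the restriction and involves not just the naive second derivative of the integrand but also the second fundamental form of the constraint — equivalently, one must be careful to differentiate $\vartheta=\vartheta(\xi)$ to second order when substituting, or instead to work intrinsically with a connection on the constraint manifold as Uhlenbeck does. Getting this bookkeeping right, and checking that all the extra boundary contributions vanish under $\xi(0)=\xi(1)=0$ and $\vartheta(0)=0$ (with $\vartheta(1)$ free but appearing only quadratically through the constraint), is the delicate computation; once the index form on the spacetime side is shown to coincide, up to the positive constant, with $d^2E_R(x)$ on the common space $\{\xi:\xi(0)=\xi(1)=0\}$, the equality $\mu(z)=\mu(x)$ and the non-conjugacy equivalence follow immediately from the definition of Morse index as the supremum of dimensions of negative-definite subspaces and of nullity as the dimension of the radical.
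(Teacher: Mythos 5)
Your proposal is correct and follows essentially the same route as the paper: both indices are realized as Morse indices of the Randers energy $E$ and of Uhlenbeck's functional $J$, the admissible variation spaces are identified via the lightlike-lift map (your projection $\zeta\mapsto\xi$ is the inverse of the paper's $W\mapsto\Psi'(x)[W]$), and the index forms are matched via $J''(z)(U,U)=2E''(x)(W,W)$. The "second fundamental form" bookkeeping you worry about is handled in the paper simply by computing the second variation of $J$ along the lifted variation $\Psi(\varphi_0(r,\cdot))$, using the functional identity $J(\Psi(\gamma))=2E(\gamma)$ and the fact that $z$ is a critical point.
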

\begin{proof}
Consider the energy functional of the Randers metric $R$ \[E(\gamma)=\tfrac{1}{2}\int_0^1\!\!R^2(\gamma,\dot \gamma)\de s.\]
Since the Morse index of $E$ at the geodesic $x$ is equal to $\mu(x)$
(see \cite{Matsum86}) and the Morse index of $J$ at $z$ is equal to $\mu(z)$, it is enough to prove the equality for the Morse indexes.
To this end, we will show that the set $\mathcal W_x$ of continuous piecewise smooth vector field along $x$ vanishing at $x(0)$ and $x(1)$  is isomorphic 
to the set of admissible variations $\mathcal U_z$ for $J$ which is given by the continuous piecewice smooth vector fields $U$ along $z$, vanishing at $z(0)$ and $z(1)$ and such that $g(\dot z,U)=0$ (see \cite{Uhlenb75}).
Let us denote by $\mathcal P_{x(0),x(1)}(S)$ and $\mathcal{L}_{z(0),l}(S\times\R)$ respectively the set of the continuous, piecewise smooth curves on $S$, parametrized on the interval $[0,1]$ and connecting $x(0)$ to $x(1)$ and the set of the continuous, piecewise smooth, future pointing, lightlike curves on $S\times\R$, parametrized on $[0,1]$ and connecting $z(0)$ to $l(\R)$.
Consider the map
\[
\Psi(\gamma)(s)=\left(\gamma(s),t_0+\int_0^s R(\gamma,\dot \gamma)\de \nu\right).\]
Recalling that the future pointing lightlike lift of a curve $\gamma$ in $S$ has $t$ component in $S\times\R$ given by \eqref{t}, we immediately see that 
$\Psi$ maps $\mathcal P_{x(0),x(1)}(S)$ to $\mathcal{L}_{z(0),l}(S\times\R)$.

We are going to show that the isomorphism between $\mathcal W_x$ and $\mathcal U_z$ is given by $\Psi'(x)$ where, for each $W\in \mathcal W_x$, $\Psi'(x)[W]$ is the vector field along $z$ belonging to $\mathcal U_z$ defined as
$\frac{\partial}{\partial r}(\Psi\circ\varphi_0)(r,s)|_{r=0}$ where $\varphi_0=\varphi_0(r,s)\colon (-\eps,\eps)\times[0,1]\to S$ is the variation of the geodesic $x$ defined by
$W$.
Observe that, since $x$ is a critical point of the length functional $x\mapsto\int_0^1R(x,\dot x)\de s$, for any $W\in \mathcal W_x$ there holds
\[\left(\Psi'(x)[W]\right)(0)=\left(\Psi'(x)[W]\right)(1)=0.\]

Let $I$ be the functional defined in the same way as $J$  
\[
I(\sigma)=\inte\big(g(\dot \sigma,\dot \sigma)+\big(\tfrac{\de P(\sigma)}{\de s}\big)^2\big)\de s\]
but now $\sigma$ varies on the set of the continuous, piecewise smooth, future pointing curves, non necessarily lightlike,  connecting $z(0)$ to $l(\R)$.
For any future pointing lightlike curve $\sigma(s)=(\gamma(s),\tau(s))$ we have
\[J(\sigma)=I(\Psi(\gamma))=2E(\gamma).\]
Moreover, for any geodesic $x$ of $(S,R)$ and for any $W\in \mathcal W_x$, we have
\beq\label{depsi}
\left(\Psi'(x)[W]\right)(s)=\left(W(s),\int_0^s(R_x(x,\dot x)[W]+R_v(x,\dot x)[\dot W])\de s\right);\eeq
hence $\Psi'(x)$ is an injective map (notice that in the right-hand side of \eqref{depsi}, with an abuse of notation, we have used the expression $R_x(x,\dot x)[W]+R_v(x,\dot x)$ which is meaningful only in local coordinates).

Let $U(s)=(W(s),u(s))\in \mathcal U_z$. We are going to show that $\Psi'(x)[W]=U$ and hence $\Psi'(x)$ is also surjective.

As $U\in \mathcal U_z$, we have  
\[g(U,\dot z)=0\quad\Leftrightarrow\quad h(W,\dot x)-(\omega(W)-u)(\omega(\dot x)-\dot t) =0.\]
Since  $z$ is lightlike and future pointing $\omega(\dot x)-\dot t=-\sqrt{h(\dot x,\dot x)}$ and thus
\[u =\frac{h(W,\dot x)}{\sqrt{h(\dot x,\dot x)}}+\omega(W)\]
Since $x$ is a critical point of $E$ and $W(0)=0$, integrating by part the $t$ component of the vector field $\Psi'(x)[W]$ in \eqref{depsi} and using the Euler-Lagrange equation satisfied by $x$, we deduce that such a component is equal to 
\[R_v(x,\dot x)[W]=\frac{h(W,\dot x)}{\sqrt{h(\dot x,\dot x)}}+\omega(W)=u.\]

Now let  $\varphi=\varphi(r,s)\colon(-\eps,\eps)\times[0,1]\to S\times\R$ be a variation defined by the admissible variational vector field $U=(W,u)$, and $\varphi_0=\varphi_0(r,s)\colon(-\eps,\eps)\times[0,1]\to S$ be the one defined by $W$, we have that 
\bmln
J''(z)(U,U)=\frac{\de^2}{\de r^2}J(\varphi(r,\cdot))_{\big| r=0}\\
=\frac{\de^2}{\de r^2}I(\Psi(\varphi_0(r,\cdot)))_{\big| r=0}=
2\frac{\de^2}{\de r^2}E(\varphi_0(r,\cdot))_{\big| r=0}=2 E''(x)(W,W).\emln	
By polarization, the above equality gives the thesis.
\end{proof}
\end{section}
\begin{section}{The lack of twice differentiability of the energy functional with respect to  the $H^1$--topology}
Let $(M,F)$ be a Finsler manifold and $p,\ q\in M$.
Let $\Omega(M)$ be the Sobolev manifold of the absolutely continuous curves $\gamma\colon[0,1]\to M$, whose square of the norm of the velocity vector field is integrable with respect to a fixed (and then to any) auxiliary Riemannian metric $\alpha$ on $M$. Let us denote by \Om the submanifold  of the curves in $\Omega(M)$, such that $\gamma(0)=p$, $\gamma(1)=q$ (see \cite{Klinge82}). Let us consider the energy functional of $F$ on \Om: 
\[E\colon \Om\to\R,\quad E(\gamma)=\tfrac{1}{2}\inte F^2(\gamma,\dot \gamma)\de s\]
It is well known that $E$ is  $C^{1,1}$ on $\Om$,  \cite{Mercur77}.

We are going to  show that if
$E$ is twice differentiable on \Om at a regular curve $\gamma$ then  $F^2$ is the square of the norm of a Riemannian metric along the curve. 

By {\em regular curve}  we mean a curve $\gamma\in \Om$ such that $\dot\gamma\neq 0$ $\mathrm{a.\ e.}$ in $[0,1]$.

\bere
We point out that in
\cite{AbbSch08} the authors consider a time-dependent Lagrangian  $L\colon [0,1]\times TM\to \R$, $L=L(t,q,v)$,  which is $C^2$ on $TM$ and which satisfies  the following conditions: there exists a continuous positive function $C=C(q)$ such that for any $(t,q,v)\in [0,1]\times TM$:
\baln &\|\partial_{vv}L(t,q,v)\|\leq C(q),\\
&\|\partial_{vq}L(t,q,v)\|\leq C(q)(1+\sqrt{\alpha(v,v)}),\\
&\|\partial_{qq}L(t,q,v)\|\leq C(q)(1+\alpha(v,v)).\ealn
They prove that if the action functional of $L$
\[\gamma\colon \Omega(M)\to\inte L(t,\gamma(t),\dot\gamma(t))\de t,\]
is twice differetiable in $\Omega(M)$ at a curve $\gamma$, then the map
\[v\in T_{\gamma(t)}M\mapsto L(t,\gamma(t),v)\] 
is a polynomial of degree at most two.
Thus, in particular, the subquadratic and strongly convex in the velocities, time-independent, $C^2$ Lagrangians whose action functional is twice differentiable at any curve in $\Omega(M)$ are all and only of the type
\[L(q,v)=h_q(v,v)+\omega_q(v)+V(q),\]
where $h$, $\omega$ and $V$ are respectively a Riemannian metric, a one-form and a function on $M$.
Clearly, the square of a Finsler metric satisfies the growth conditions above
but it is only a $C^{1,1}$ function on $TM$ (it is $C^2$ on $TM\setminus 0$). Anyway, as we show below, the proof in \cite{AbbSch08} does not involve existence and continuity of the derivatives $\partial_{vv}L(t,q,v)$ for $v=0$ and then it extends also to the Finsler case.
Another difference from \cite{AbbSch08} is that we consider the manifold \Om and not $\Omega(M)$.
\ere
Before going into the details of the proof, we would like to point out  
what is the problem in trying to prove that $E$ is twice differentiable in \Om at a regular curve.
To fix ideas, we assume that $F$ is defined on an open subset $U$ of $\R^n$,  $F\colon TU\to\R$, $U\subset\R^n$.
Arguing as in \cite[Proposition 3.1]{AbbSch08} gives that $E$ is twice Gateaux differentiable in $\Omega_{p,q}(U)$  at any regular curve $x$ and its second Gateaux differential is equal to
\baln
\lefteqn{D^2\tilde E(x)[\xi,\eta]=}&\nonumber\\
&=\frac12\inte\big(\partial_{qq}F^2(x,\dot x)[\xi,\eta]+\partial_{vq}F^2(x,\dot x)[\dot \xi, \eta]\big)\de s\nonumber\\
&\quad+\frac 12\inte\big(\partial_{qv}F^2(x,\dot x)[\xi,\dot\eta]\big)\de s+\partial_{vv}F^2(x,\dot x)[\dot \xi,\dot \eta]\big)\de s.\ealn
The problem is the continuity  of the map 
\[x\in \Omega_{p,q}(U)\mapsto \inte\partial_{vv}F^2(x,\dot x)[\cdot,\cdot]\de s,\]
where the target space is the space of bounded bilinear operators on $H^1_0([0,1],U)$.
Namely, we can prove that  if 
$x_n\to x$ in $\Omega_{p,q}(U)$  then 
\[\int_0^1  \partial_{vv} F^2(x_n, \dot x_n)[\dot \xi, \dot\eta]\de s \To \int_0^1  \partial_{vv}F^2(x, \dot x)[\dot \xi, \dot\eta]\de s,\quad \text{as $n\to +\infty$,}\] 
but we cannot prove that the convergence is  uniform with respect to $\xi$ and $\eta$ in the unit ball of $H^1_0([0,1],U)$,
unless $\partial_{vv}F^2$ is independent from $v$ (and then $F^2$ is the square of the norm of a  Riemannian metric).
In fact, we have the following
\bpr
If the energy functional of a Finsler metric $F$ 
is twice differentiable at a regular
curve $\gamma\in \Om$ then, for $\mathrm{a.\ e.}$ $s\in [0,1]$, the function
\[
v\in T_{\gamma(s)} M\mapsto F^2(\gamma(s),v)
\]
is a quadratic positive definite form.
\epr
\begin{proof}
For simplicity and without loss of generality, we  prove the statement in the case where $M$ is an open subset of $\R^n$.
Since $\dot\gamma\neq 0$ $\mathrm{a.\ e.}$ on $[0,1]$, the thesis is equivalent to the fact that, for almost every $s\in [0,1]$, there holds
\[
\partial_v F^2(\gamma(s),\dot \gamma(s)+v) - \partial_v F^2 (\gamma(s),\dot\gamma(s)) - \partial_{vv}F^2(\gamma(s),\dot\gamma(s)) [v] = 0, \]
for all $v\in \R^n$.
By contradiction, we assume that there is a set of positive measure $J\subset [0,1]$ and  two non-zero vectors $v,w\in \R^n$, and a positive number $c$ such that    
\beq\label{c}
\Big(\partial_v F^2(\gamma(s),\dot \gamma(s)+v) - \partial_v F^2 (\gamma(s),\dot\gamma(s)) - \partial_{vv}F^2(\gamma(s),\dot\gamma(s)) [v]\Big)\cdot w>c, \eeq
For every $\epsilon>0$ smaller than the measure of $J$, choose a subset $J_{\epsilon}\subset J$ of measure $\epsilon$, in such a way that $J_{\epsilon} \subset J_{\epsilon'}$ if $\epsilon<\epsilon'$. Define the following  functions 
\[
\eta_{\epsilon} (s) = v \int_0^s (\chi_{\epsilon}(t)- \epsilon)\, dt, \quad 
\xi_{\epsilon} (s) = w \int_0^s (\chi_{\epsilon}(t)- \epsilon)\, dt,
\]
where $\chi_{\epsilon}$ is the characteristic function of $J_{\epsilon}$.
Observe that, for any $\eps$, the functions $\eta_{\epsilon}, \xi_{\epsilon}$ belong to $T_{\gamma}\Om=H^1_0([0,1],\R^n)$
and 
\[\|\eta_{\epsilon}\|_{H^1_0}=|v|(\epsilon-\epsilon^2)^{1/2} \quad \|\xi_{\epsilon}\|_{H^1_0}=|w|(\epsilon-\epsilon^2)^{1/2}.\]
We can repeat  the proof of Proposition~3.2 in \cite{AbbSch08} taking care only that  the derivatives of $\eta$ and $\xi$ here are given by  $v(\chi_{\epsilon}-\epsilon)$ and $w(\chi_{\epsilon}-\epsilon)$ and  the terms  involving integrals of the type
\[\inte \big(\partial_vF^2(\gamma+\eta_{\epsilon},\dot\gamma+\dot\eta_{\epsilon})-\partial_vF^2(\gamma,\dot\gamma+\dot\eta_{\epsilon})-\partial_{qv}F^2(\gamma,\dot\gamma+\dot\eta_{\epsilon})[\eta_{\epsilon}]\big)\cdot (\epsilon\, w)\,\de s\]
belong to $o(\epsilon)$, as $\epsilon\to 0$ (such terms do not appear in \cite{AbbSch08} because   the functions playing  the role of $\eta$ and $\xi$, not having to  belong to $H^1_0([0,1],\R^n)$, are defined as $\eta(s)= v \int_0^s \chi_{\epsilon}(t)\de t$ and $\xi(s)= w \int_0^s \chi_{\epsilon}(t)\de t$).

We point out that the non existence of the derivatives $\partial_{vv}F^2(q,v)$  for $v=0$ does not affect that part of the proof since  only the smoothness of $\partial_v F^2(q,v)$ with respect to $q$ is used. 

Thus, as in \cite{AbbSch08}, we can deduce
\beq\label{fine}
\int_0^1 \left( \partial_v F^2 (\gamma,\dot\gamma+\dot\eta_{\epsilon}) - \partial_v F^2(\gamma,\dot\gamma) - \partial_{vv}F^2(\gamma,\dot\gamma) [\dot\eta_{\epsilon}] \right)\cdot \dot\xi_{\epsilon} \de s = o(\epsilon), \quad \text{as $\epsilon \to 0$}.
\eeq
The left-hand side of \eqref{fine} is equal to
\bal
\lefteqn{\int_0^1 \big(\partial_v F^2 (\gamma,\dot\gamma+\dot\eta_{\epsilon}) - \partial_v F^2(\gamma,\dot\gamma) - \partial_{vv}F^2(\gamma,\dot\gamma) [\dot\eta_{\epsilon}] \big)\cdot (\chi_{\epsilon}w-\epsilon w)\de s=}&\nonumber\\
&=\int_{J_{\epsilon}} \big(\partial_vF^2(\gamma,\dot\gamma+(1-\epsilon) v)
-\partial_vF^2(\gamma,\dot\gamma)- \partial_{vv}F^2(\gamma,\dot\gamma)[(1-\epsilon)v]\big)\cdot w\de s\nonumber\\
&\quad+\inte\big(\partial_v F^2 (\gamma,\dot\gamma+\dot\eta_{\epsilon}) - \partial_v F^2(\gamma,\dot\gamma) - \partial_{vv}F^2(\gamma,\dot\gamma) [\dot\eta_{\epsilon}] \big)\cdot (\epsilon w)\de s\label{fine2}
\eal
Since $\dot\eta_{\epsilon}\to 0$ $\mathrm{a.\ e.}$ as $\epsilon\to 0$, by the Lebesgue's dominated convergence theorem, the absolute value of the second integral in the right-hand side of \eqref{fine2} is less than
$\epsilon|w|o(1)$.

Therefore, putting together \eqref{fine} and \eqref{fine2}, we have
\beq\label{fine3}
\int_{J_{\epsilon}} \!\!\big(\partial_vF^2(\gamma,\dot\gamma+(1-\epsilon) v)
-\partial_vF^2(\gamma,\dot\gamma)- \partial_{vv}F^2(\gamma,\dot\gamma)[(1-\epsilon)v]\big)\cdot w\de s
= o(\epsilon),\quad\text{as $\epsilon \to 0$}.
\eeq
By \eqref{c} and the continuity of the map 
\[v\in\R^n\mapsto \Big(\partial_v F^2(\gamma(s),\dot \gamma(s)+v) - \partial_v F^2 (\gamma(s),\dot\gamma(s)) - \partial_{vv}F^2(\gamma(s),\dot\gamma(s)) [v]\Big)\cdot w\]
the integral in \eqref{fine3} is larger than $c \epsilon$, for $\epsilon$ small enough,  giving a contradiction.
\end{proof}

\end{section}


\begin{thebibliography}{10}

\bibitem{AbbSch08}
{\sc A.~Abbondandolo and M.~Schwarz}, {A smooth pseudo-gradient for the
  Lagrangian action functional}, Adv. Nonlinear Stud., {\bf 9} (2009), pp. 597--623.
  
\bibitem{CaJaMa08}
{\sc E.~Caponio, M.~A. Javaloyes, and A.~Masiello}, { On the energy
  functional on {F}insler manifolds and applications to stationary spacetimes},
  Math. Ann.,  DOI: 10.1007/s00208-010-0602-7, in press,
\href{http://arxiv.org/abs/math/0702323}{arXiv:math/0702323v4 [math.DG]}.

\bibitem{CaJaMa09}
{\sc E.~Caponio, M.~A. Javaloyes, and A.~Masiello}, {Morse theory of
  causal geodesics in a stationary spacetime via morse theory of geodesics of a
  Finsler metric}, Ann. I. H. Poincar\'e -- AN, {\bf 27} (2010), pp. 857--876.

\bibitem{CaJaS09}
{\sc E.~Caponio, M.~A. Javaloyes, and M.~S{\'a}nchez}, {On the interplay
  between Lorentzian causality and Finsler metrics of Randers type}, 
Rev. Mat. Iberoamericana, in press,
\href{http://arxiv.org/abs/0903.3501}{arXiv:0903.3501v2 [math.DG]}.

\bibitem{Klinge82}
{\sc W.~Klingenberg}, {\em Riemannian Geometry}, Walter de Gruyter \& Co., Berlin, 1982.

\bibitem{Matsum86}
{\sc M.~Matsumoto}, {\em Foundations of {F}insler Geometry and Special
  {F}insler Spaces}, Kaiseisha Press, Shigaken, 1986.

\bibitem{Mercur77}
{\sc F.~Mercuri}, {The critical points theory for the closed geodesics
  problem}, Math. Z., {\bf 156} (1977), pp.~231--245.

\bibitem{MinS08}
{\sc E.~Minguzzi and M.~S{\'a}nchez}, {The causal hierarchy of spacetimes},
  in {\em Recent developments in pseudo-{R}iemannian geometry}, ESI Lect. Math.
  Phys., Eur. Math. Soc., Z\"urich (2008), pp.~299--358.

\bibitem{One83}
{\sc B.~O'Neill}, {\em Semi-{R}iemannian geometry}, Academic Press Inc., New York, 1983.

\bibitem{Shen01}
{\sc Z.~Shen}, {\em Lectures on {F}insler geometry}, World Scientific
  Publishing Co., Singapore, 2001.

\bibitem{Uhlenb75}
{\sc K.~Uhlenbeck}, {A Morse theory for geodesics on a Lorentz
  manifold}, Topology, {\bf 14} (1975), pp.~69--90.

\bibitem{Wald84}
{\sc R.~M. Wald}, {\em General {R}elativity}, The University of Chicago Press,
  Chicago-London, 1984.

\end{thebibliography}
\end{document}